\documentclass[11pt,reqno]{amsart}

\usepackage[T1]{fontenc}
\usepackage{lmodern}
\usepackage{amsmath,amssymb,amsthm,mathtools}
\usepackage{tikz-cd}
\usepackage{enumitem}
\usepackage{microtype}
\usepackage[margin=1.1in]{geometry}
\usepackage[hidelinks]{hyperref}

\newtheorem{theorem}{Theorem}[section]
\newtheorem{proposition}[theorem]{Proposition}
\newtheorem{lemma}[theorem]{Lemma}
\newtheorem{corollary}[theorem]{Corollary}
\theoremstyle{definition}
\newtheorem{remark}[theorem]{Remark}
\newtheorem{example}[theorem]{Example}
\numberwithin{equation}{section}

\newcommand{\Z}{\mathbb{Z}}
\newcommand{\Aff}{\mathbb{A}}
\newcommand{\PP}{\mathbb{P}}
\newcommand{\eff}{\mathrm{eff}}
\newcommand{\red}{\mathrm{red}}
\newcommand{\DM}{\mathbf{DM}}
\newcommand{\MDM}{\mathbf{MDM}}
\newcommand{\uMDM}{\mathbf{\underline{M}DM}}
\newcommand{\MCor}{\mathbf{MCor}}
\newcommand{\uMCor}{\mathbf{\underline{M}Cor}}
\newcommand{\Cor}{\mathbf{Cor}}
\newcommand{\uM}{\underline{M}}
\newcommand{\MV}{M^{\mathrm V}}
\newcommand{\one}{\mathbf{1}}
\newcommand{\cbar}{\overline{\square}}
\newcommand{\id}{\mathrm{id}}
\DeclareMathOperator{\Spec}{Spec}

\DeclareMathOperator{\Cone}{Cone}

\title[Motives with modulus with finite coefficients]
{Voevodsky motives and motives with modulus\\
with finite coefficients in characteristic zero}
\author{Keiho Matsumoto}
\address{Graduate School of Science, The University of Osaka, Japan}
\email{lightkun0526@gmail.com}
\subjclass[2020]{14F42, 14C25, 19E15}
\keywords{Motives with modulus, finite coefficients, higher Chow groups,
reciprocity sheaves}
\thanks{The author is supported by JSPS KAKENHI Grant Number JP25KJ0210.}
\date{}

\begin{document}

\begin{abstract}
Let $k$ be a field of characteristic zero and let $n\geq 2$ be an
integer. We prove that, with $\Z/n\Z$-coefficients, the category of
effective motives with modulus is equivalent to the category of
effective Voevodsky motives.
\end{abstract}

\maketitle

\section{Introduction}\label{intro}

Let $k$ be a field of characteristic zero. Kahn--Miyazaki--Saito--Yamazaki
introduced motives with modulus in \cite{KMSY21a,KMSY21b,KMSY22}.
Their category $\MDM^\eff(k)$ is generated by proper modulus pairs
$(X,D)$, where $D$ is an effective Cartier divisor and
$X\setminus |D|$ is smooth over $k$. Taking the interior induces a
symmetric monoidal localization
\[
 \omega_{\eff}\colon
 \MDM^\eff(k)\longrightarrow\DM^\eff(k)
\]
to Voevodsky's category of effective motives; see
\cite{V00} and \cite[Proposition~6.1.2]{KMSY22}.
We prove that this functor becomes an equivalence with finite
coefficients.

\begin{theorem}\label{main}
Let $k$ be a field of characteristic zero and let $n\geq 2$ be an
integer. Put $\Lambda=\Z/n\Z$. Then the canonical functor
\[
 \omega_{\eff}\colon
 \MDM^\eff(k,\Lambda)\longrightarrow\DM^\eff(k,\Lambda)
\]
is an equivalence of symmetric monoidal triangulated categories.
\end{theorem}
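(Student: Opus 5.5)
We use the known structure of $\omega_{\eff}$. By \cite{KMSY22}, $\omega_{\eff}$ is a Verdier localization: it has a fully faithful right adjoint $\omega^{\eff}$, and its kernel is the localizing subcategory generated by the cones
\[
 \Cone\bigl(M(X,D)\to M(X,D')\bigr)
\]
for proper modulus pairs with $D'\le D$ and the same interior $X\setminus|D|=X\setminus|D'|$. The same holds with $\Lambda$-coefficients, since everything is built from $\Lambda$-linear correspondences. So it suffices to prove these cones vanish in $\MDM^\eff(k,\Lambda)$. Then $\omega_{\eff}$ is an equivalence of triangulated categories, and it is automatically symmetric monoidal because it already is monoidal. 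Resolution of singularities in characteristic zero, together with blow-up invariance in $\MDM^\eff$, lets us assume $X$ smooth and $|D|$ a simple normal crossings divisor. By a dévissage on the multiplicities, we further reduce to the basic case where $D'$ and $D$ differ by adding one to the multiplicity of one component.

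Using the Gysin triangle and the Thom/blow-up formulas for modulus pairs with snc support, we localize near a component and reduce this basic case to the universal model. Concretely, we must show that for all $m\ge 1$ the map
\[
 M(\PP^1,(m+1)\{\infty\})\longrightarrow M(\PP^1,m\{\infty\})
\]
is an isomorphism after tensoring with $\Lambda$ and with an arbitrary motive. Equivalently, the "modulus-graded" pieces $\Cone(\cbar^{(m+1)}\to\cbar^{(m)})\otimes\Lambda$ must vanish. By the cube invariance $M(\cbar)\cong\one$ and the splitting $M(\PP^1,m\infty)\cong\one\oplus(\text{reduced part})$, this amounts to showing that the reduced parts $\uM^{(m)}$ become independent of $m$ modulo $n$.

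The heart of the argument, and the expected main obstacle, is this independence mod $n$. We compute via representability and reciprocity sheaves. By \cite{KMSY22}, Hom groups from $\Z_{\rm tr}(\PP^1,m\infty)$ into the motive of a reciprocity sheaf $F$ are given by sections of modulus $\le m$. So it suffices to prove the following: for every $\Lambda$-linear ($n$-torsion) reciprocity sheaf $F$, every section over $\Aff^1_L$ has modulus $1$ at $\infty$, i.e.\ the motivic conductor is tame. This is exactly where characteristic zero enters. By the conductor theory of Rülling--Saito, a higher conductor forces a nontrivial map from $\mathbb{G}_a$-type ("Witt/additive") phenomena into $F$. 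Indeed, the graded pieces of the conductor filtration are described by Kähler differentials, which are uniquely divisible $\mathbb{Q}$-vector spaces in characteristic $0$. Since $F$ is killed by $n$, those maps vanish. Hence $F$ is $\Aff^1$-invariant, and Hom into it cannot detect the modulus.

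To pass from sheaves to complexes, we use the homotopy $t$-structure and a Postnikov/hypercohomology argument, noting that the cohomology sheaves of any object of $\MDM^\eff(k,\Lambda)$ are $n$-torsion reciprocity sheaves. From this the cones have zero Hom into every object, hence vanish. If the conductor-theoretic input proves unavailable in this form, we would instead compute directly with higher Chow groups with modulus. There we would try to show that the comparison $CH^r(X|D,q;\Lambda)\to CH^r(X\setminus D,q;\Lambda)$ is an isomorphism, using the Bloch–Esnault additive Chow groups, which are $\mathbb{Q}$-vector spaces in characteristic $0$, to kill the difference.
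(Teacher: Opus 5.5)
You have a genuine gap in the core step. Your outer reductions are reasonable and close to the paper's: reduce to smooth proper pairs with strict normal crossings support, change one multiplicity at a time, and localize to $B_m=(\PP^1,m\{\infty\})$ tensored with an arbitrary pair. The paper does the localization via the excision theorem \cite[Theorem~3.2]{M23a}, Kelly--Saito neighbourhoods \cite{KS21} and Mayer--Vietoris. It also checks the unit on generators using \cite[Theorem~7.1]{M23a} rather than quoting a description of $\ker\omega_{\eff}$.

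To show that $C=\Cone\bigl(M(B_{m+1})_\Lambda\to M(B_m)_\Lambda\bigr)$, tensored with anything, vanishes, you need $\mathrm{Hom}(C,K[i])=0$ for \emph{every} object $K$. Your argument only tests objects attached to $n$-torsion reciprocity sheaves. Your bridge is the claim that ``the cohomology sheaves of any object of $\MDM^\eff(k,\Lambda)$ are $n$-torsion reciprocity sheaves'', together with a Postnikov argument, and it is not available. Objects are complexes of modulus presheaves with transfers that are local for cube invariance and Mayer--Vietoris. No analogue of Voevodsky's theorem on homotopy invariant presheaves with transfers (the input for the homotopy $t$-structure on $\DM^\eff$) is known for general cube-invariant modulus presheaves. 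So no homotopy $t$-structure on $\MDM^\eff$ of the kind you use is known.

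Even granting one, its heart would consist of cube-invariant \emph{modulus} sheaves $G$, not reciprocity sheaves. The group $G(B_m)$ is not known to be controlled by the underlying reciprocity sheaf on $\Aff^1$. For the universal test object (the representable $\Lambda$-presheaf of $B_m$ modulo cube homotopy), the required statement is literally $\id=e_m$, i.e.\ the very contraction of $B_m$ you are trying to prove. So tameness of $n$-torsion reciprocity sheaves, although true (cf.\ \cite[Theorem~3.5(1)]{BCKS17}, quoted in the paper only as a sheaf-theoretic analogue), does not give the categorical statement.

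Your fallback does not help either. The map $\mathrm{CH}^r(X\mid D,q;\Lambda)\to\mathrm{CH}^r(X\setminus|D|,q;\Lambda)$ is not an isomorphism in general. For $X=\PP^1$, $D=\{0\}+\{\infty\}$, $r=1$, $q=0$, the degree maps the left side onto $\Lambda$, while $\mathrm{CH}^1(\mathbb{G}_m;\Lambda)=0$. The correct statement is Miyazaki's independence of multiplicities \cite{M19}, and such groups only compute particular Hom groups anyway.

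The missing idea is to contract $B_m$ at the level of correspondences, where no $t$-structure is needed and compatibility with tensor products is automatic. The paper takes $g(T)=\sum_{j=0}^{m-1}(-1)^j\binom{1/n}{j}T^j$, so that $g^n\equiv 1-T\pmod{T^m}$. It uses the integral correspondence $\beta_{m,n}=[V(P)]$ with $P(x,y)=y^{m-1}g(x/y)$. The homotopy is $\Gamma=[V(H)]\colon B_m\otimes\cbar\rightsquigarrow B_m$ with $H=(1-t)P^n+t\,y^{N-1}(y-x)$ and $N=n(m-1)$. The congruence removes the monomials $x^jy^{N-j}$ with $2\le j\le m-1$, which is exactly what makes the modulus inequality $ma_y\le ma_x+a_t$ hold on the normalization. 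Comparing the fibres at $t=0$ and $t=1$ gives $n\beta_{m,n}=\id+(n(m-1)-1)e_m$ in $\MDM^\eff(k,\Z)$. Hence $\id=e_m$ and $M(B_m)_\Lambda\simeq\Lambda$. In this step characteristic zero enters through $\binom{1/n}{j}\in k$, and this explicit identity is what replaces your conductor argument.
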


This is a categorical analogue of Miyazaki's independence theorem
for higher Chow groups with modulus
\cite[Theorem~1.3 and Corollary~5.2]{M19}. In characteristic zero,
that theorem says that torsion coefficients remove the dependence
on the multiplicities of the modulus. In particular, for a smooth
equidimensional $k$-scheme $X$ and an effective Cartier divisor $D$,
the natural map
\[
 \mathrm{CH}^r(X\mid D,q;\Lambda)
 \xrightarrow{\sim}
 \mathrm{CH}^r(X\mid D_\red,q;\Lambda)
\]
is an isomorphism for all $r,q\geq 0$.
There is also a sheaf-theoretic analogue: by
\cite[Theorem~3.5(1)]{BCKS17}, a torsion reciprocity presheaf with
transfers in characteristic zero is homotopy invariant if it is
separated for the Zariski topology. Thus reciprocity Nisnevich
sheaves with transfers and homotopy invariant Nisnevich sheaves
with transfers coincide with $\Lambda$-coefficients.

In positive characteristic, a comparison with $\Z[1/p]$-coefficients
is known under log resolution of singularities
\cite[Theorem~8.9]{M23a}. More precisely, for a perfect field $k_0$
of characteristic $p>0$ satisfying this resolution hypothesis,
\[
 \omega_{\eff}\colon
 \MDM^\eff(k_0,\Z[1/p])
 \xrightarrow{\sim}
 \DM^\eff(k_0,\Z[1/p])
\]
is an equivalence.

The essential calculation in the present proof concerns the pairs
$B_m=(\PP^1,m\{\infty\})$. Let $e_m$ be the constant correspondence
on $B_m$ with value $0$. We construct an integral correspondence
$\beta_{m,n}\colon B_m\rightsquigarrow B_m$ satisfying
\begin{equation}\label{introidentity}
 n\beta_{m,n}
 =\id+\bigl(n(m-1)-1\bigr)e_m
 \quad\text{in }\MDM^\eff(k,\Z).
\end{equation}
The construction uses a truncated $n$-th root of $1-T$.
The truncation makes the low-order terms cancel in the homotopy
polynomial; this cancellation gives exactly the pole bound required
by the modulus condition. Modulo $n$, \eqref{introidentity} contracts
$B_m$ to the origin. Nisnevich excision then removes the
multiplicities of an arbitrary strict normal crossings modulus.
The rest follows from log resolution and the reduced-boundary
comparison \cite[Theorem~7.1]{M23a}.

\subsection*{Notation}
Except in the positive-characteristic comparison above, $k$ has
characteristic zero and $\Lambda=\Z/n\Z$ with $n\geq 2$.
All schemes are separated and of finite type over the base field.
We allow the empty divisor. A strict normal crossings divisor has
smooth irreducible components meeting transversally. We write $|D|$
for the support of an effective Cartier divisor $D$ and, when the
ambient scheme is smooth, $D_\red$ for its reduced divisor.
All tensor products and coefficient changes are derived.
Mapping cones of the explicit complexes used below are formed
before passing to the Verdier quotient.

\section{Preliminaries}\label{prelim}

\subsection{Modulus pairs and correspondences}

We recall the definitions of modulus pairs and admissible
correspondences from \cite[Section~1]{KMSY21a}.
A modulus pair is a pair $\mathcal X=(X,D)$ consisting of a scheme
$X$ and an effective Cartier divisor $D$ such that
$\mathcal X^\circ=X\setminus |D|$ is smooth over $k$.
It is proper if $X$ is proper.

Let $\mathcal X=(X,D)$ and $\mathcal Y=(Y,E)$ be modulus pairs,
and let $V\subset\mathcal X^\circ\times\mathcal Y^\circ$ be an
integral finite correspondence. Write $\overline V$ for its closure
in $X\times Y$ and $\widetilde V$ for the normalization of
$\overline V$. With the induced projections denoted by $p_X,p_Y$,
the admissibility condition is
\begin{equation}\label{admissibility}
 p_X^*D\geq p_Y^*E\quad\text{on }\widetilde V.
\end{equation}
The correspondence is left proper if $\overline V\to X$ is proper.
The category $\uMCor(k)$ has modulus pairs as objects and integral
linear combinations of admissible left proper finite correspondences
as morphisms. Its full subcategory on proper pairs is $\MCor(k)$.
We denote the ordinary category of finite correspondences between
smooth $k$-schemes by $\Cor(k)$.

The tensor product of modulus pairs is
\[
 (X,D)\otimes(Y,E)
 =\bigl(X\times Y,\operatorname{pr}_X^*D+
                         \operatorname{pr}_Y^*E\bigr).
\]
Put $B_m=(\PP^1,m\{\infty\})$ for $m\geq 1$ and
$\cbar=B_1$.

\subsection{The categories of motives}\label{categories}

For $R=\Z$ or $\Lambda$, we define $\MDM^\eff(k,R)$ and
$\uMDM^\eff(k,R)$ as in
\cite[Definitions~3.1.1 and 3.2.4]{KMSY22}, using $R$-valued
presheaves with transfers. Namely, we take the unbounded derived
categories of these presheaves on $\MCor(k)$ and $\uMCor(k)$,
respectively, and form the Verdier quotients by the localizing
subcategories generated by cube-invariance and Mayer--Vietoris
relations. We write $\DM^\eff(k,R)$ for Voevodsky's category of
effective motives \cite{V00}, with $R$-coefficients, using its
unbounded version as in \cite{KMSY22}.

The images of the representable presheaves are denoted by
$M(X,D)_R$ for a proper pair and $\uM(X,D)_R$ for an arbitrary
pair. For a smooth scheme $U$, its Voevodsky motive is denoted by
$\MV(U)_R$. We write $\one_R$, or simply $R$, for the tensor unit.

The same arguments as in the integral case
\cite[Theorems~3.3.1 and 5.2.2, and
Propositions~6.1.1--6.1.2]{KMSY22} give symmetric monoidal functors
\[
\begin{aligned}
 \tau_{\eff}&\colon
 \MDM^\eff(k,R)\longrightarrow\uMDM^\eff(k,R),\\
 \omega_{\eff}&\colon
 \MDM^\eff(k,R)\longrightarrow\DM^\eff(k,R),\\
 \underline\omega_{\eff}&\colon
 \uMDM^\eff(k,R)\longrightarrow\DM^\eff(k,R),
\end{aligned}
\]
with right adjoints $\tau^\eff$, $\omega^\eff$, and
$\underline\omega^\eff$, respectively. The functors
$\tau_{\eff}$, $\omega^\eff$, and $\underline\omega^\eff$ are fully
faithful, and all six functors preserve coproducts. Moreover,
$\omega_{\eff}=\underline\omega_{\eff}\tau_{\eff}$ and
$\omega^\eff\simeq\tau^\eff\underline\omega^\eff$.
The motives of proper pairs, arbitrary pairs, and smooth schemes
compactly generate the respective categories.

For an integral motive $A$, write $A_\Lambda$ for its derived
extension of coefficients. The comparison adjunctions are compatible
with this extension, including their units and counits. In particular,
$(\omega^\eff_\Z B)_\Lambda\simeq\omega^\eff_\Lambda(B_\Lambda)$ for
$B\in\DM^\eff(k,\Z)$.

\section{The motive of a thickened interval}\label{interval}

For $m\geq 1$, let $p_m\colon B_m\to\Spec k$ be the structure
morphism, and let $i_m\colon\Spec k\to B_m$ be the section with value
$0$. Put $e_m=i_mp_m$. We use the same notation for the induced
morphisms of motives.

\begin{proposition}\label{identity}
Let $m\geq 2$ and $n\geq 2$. There exists an admissible
correspondence with integral coefficients
$\beta_{m,n}\colon B_m\rightsquigarrow B_m$ such that
\begin{equation}\label{integralidentity}
 n\beta_{m,n}
 =\id_{M(B_m)_\Z}+\bigl(n(m-1)-1\bigr)e_m
\end{equation}
in $\MDM^\eff(k,\Z)$.
\end{proposition}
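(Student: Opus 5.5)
The plan is to realize \eqref{integralidentity} as an \emph{integral homotopy}: find a finite correspondence
\[
 H\colon B_m\otimes\cbar\rightsquigarrow B_m
\]
whose restriction to $u=0$ is $n\beta_{m,n}$ and whose restriction to $u=1$ is $\id+\bigl(n(m-1)-1\bigr)e_m$. Cube invariance in $\MDM^\eff(k,\Z)$ identifies $H\circ(\id\otimes i_0)$ with $H\circ(\id\otimes i_1)$, which gives the identity. Everything is therefore reduced to writing down $H$ and checking the admissibility condition \eqref{admissibility}. Left properness is automatic, since $B_m$ and $\cbar$ are proper.

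\textbf{The polynomial.} Work with the coordinate $t$ on the source and $s$ on the target. Define $H$ as the divisor of a polynomial $F(t,s,u)$ that is monic of degree $n$ in $s$, so that its zero locus is finite and surjective over $\Aff^1_t\times\Aff^1_u$. Let
\[
 g_N(T)=\sum_{j<N}\binom{1/n}{j}(-T)^j,
\]
so that $g_N(T)^n=1-T+O(T^N)$. In the local coordinate $x=1/t$ at $\infty$, I would take $\beta_{m,n}$ to be the cycle $\{s^n=\phi(t)\}$ for a suitable rational $\phi$. I would then interpolate linearly in $u$ between
\begin{itemize}[label={}]
 \item $F_0=s^n-\phi(t)$, whose divisor is $n\beta$ after arranging $\phi$ as an $n$-th power up to the correction, and
 \item $F_1=(s-t)\,s^{\,n-1}\cdot(\text{correction})$, which produces $\id+(\text{const})\cdot e_m$.
\end{itemize}
Here the constant sections $s=0$ account for $e_m=i_mp_m$. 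The coefficient $n(m-1)-1$ is forced by intersection degrees. The $s$-degree of $F$ on the $\id$ side is $1$ plus the number of constant components, and this must match the pole order along $t=\infty$ allowed by the modulus $m$, which is roughly $n(m-1)$. I would fix $N\approx m$ so that this degree bookkeeping comes out exactly.

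\textbf{Admissibility.} Normalize the closure $\overline H$ in $\PP^1_t\times\PP^1_u\times\PP^1_s$. On the normalization we must verify
\[
 m\,p_t^*\{\infty\}+p_u^*\{\infty\}\geq m\,p_s^*\{\infty\}.
\]
This is a local computation at points where $s\to\infty$, using Puiseux expansions of the roots of $F$ in $x=1/t$ (and in $1/u$). The role of the truncation is that $g_N(x)^n-(1-x)$ vanishes to order $N$. Consequently, in $F_u=uF_1+(1-u)F_0$ the low-order terms in $x$ cancel, so each root $s$ has a pole in $x$ of order at most $m$ times that of $t$. I also need to check the fibres over $u=\infty$, where the pole of $u$ gives extra room, and check that the specializations at $u=0,1$ really are the stated cycles with multiplicities; no spurious vertical components should appear.

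\textbf{Main obstacle.} I expect the main obstacle to be exactly this pole-order estimate on the normalization. One has to choose the exponents and the truncation length $N$ so that the cancellation in $F_u$ gives precisely the bound $m$ uniformly in $u$, including near $u=\infty$ and near $t=\infty$ simultaneously. I would first test the case $m=2$ by hand, reading off the Newton polygon of $F$ in $(x,s^{-1})$, and then generalize the choice of $N$ and of the correction term. If the linear interpolation in $u$ fails admissibility near $u=\infty$, the fallback is to compose two homotopies, for instance through the correspondence $s\mapsto s^n$ and its transpose, each admissible separately.
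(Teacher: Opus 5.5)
Your framework matches the paper's: a homotopy $B_m\otimes\cbar\rightsquigarrow B_m$ whose endpoints are identified by cube invariance, linear interpolation in the $\cbar$-coordinate, a truncated $n$-th root of $1-T$, and a unique-minimal-valuation argument on the normalization. But the proposal never constructs $\beta_{m,n}$ or the homotopy, and the concrete choices you do make cannot work.

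\textbf{Degree obstruction.} After applying $\omega_{\eff}$, a finite correspondence of degree $d$ over $\Aff^1$ acts on $\MV(\Aff^1)_\Z\cong\Z$ as multiplication by $d$. So the identity forces $n\deg\beta_{m,n}=n(m-1)$, i.e.\ $\deg\beta_{m,n}=m-1$. Likewise, a homotopy cut out by a polynomial monic in the target coordinate must have degree $N=n(m-1)$ in it.
\begin{itemize}
\item Your $F$, monic of degree $n$ in $s$, violates this unless $m=2$.
\item Your $\beta=\{s^n=\phi(t)\}$ has degree $n$, which violates it unless $m=n+1$.
\item The two choices already contradict each other: a degree-$n$ homotopy cannot have $n\beta$ with $\deg\beta=n\geq 2$ as an endpoint.
\end{itemize}

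\textbf{Other concrete failures.}
\begin{itemize}
\item For $\phi\neq 0$, $s^n-\phi(t)$ is squarefree in characteristic zero, so its fibre cycle is $\beta$, not $n\beta$. The $u=0$ end has to be the $n$-th power of an equation of $\beta$.
\item Your pole heuristic (``each root $s$ has a pole at most $m$ times that of $t$'') is not the modulus condition, which reads $m\,a_s\leq m\,a_t+a_u$.
\item The fallback through $s\mapsto s^n$ fails, because that map multiplies pole orders by $n$ and is not admissible $B_m\to B_m$.
\end{itemize}

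\textbf{The missing idea} is where the truncation goes: in the ratio of source to target coordinate, not in $1/t$. Write $x,y$ for the source and target coordinates. Let $g$ be the truncation of $(1-T)^{1/n}$ modulo $T^m$, and put $P(x,y)=y^{m-1}g(x/y)$. This is homogeneous, monic of degree $m-1$ in $y$, and defines $\beta_{m,n}$, which is admissible by the same valuation argument.

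The degree-$N$ forms $P(x,y)^n$ and $y^{N-1}(y-x)$ have the same coefficients of $x^jy^{N-j}$ for every $j\leq m-1$. Hence in $H=(1-u)P^n+u\,y^{N-1}(y-x)$ the $u$-dependent part involves only monomials with $j\geq m$. Put $a_\bullet=\max\{0,-v(\bullet)\}$. If $a_y>a_x+a_u/m$, then $y^N$ is the unique term of minimal valuation, which is a contradiction. This gives $m\,a_y\leq m\,a_x+a_u$ at every codimension-one point of the normalization, including over $u=\infty$, so no separate check there is needed.

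The endpoints are $n\beta_{m,n}$ and $\Delta_{\Aff^1}+(N-1)(\Aff^1\times\{0\})$, which is exactly the required identity. Without this matching of low-order coefficients, your interpolation has no mechanism forcing admissibility. As it stands the proposal is a plan rather than a proof.
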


\begin{proof}
Put $r=m-1$ and $N=nr$. We construct an admissible correspondence
\[
 \Gamma\colon B_m\otimes\cbar\rightsquigarrow B_m
\]
whose restrictions to the two endpoints are the two sides of
\eqref{integralidentity}.

\medskip
\noindent\textit{Step 1: Construction of $\beta_{m,n}$.}
Since $k$ has characteristic zero, the polynomial
\[
 g(T)=\sum_{j=0}^{r}c_jT^j,
 \qquad
 c_j=(-1)^j\binom{1/n}{j},
\]
is defined over $k$. It is the truncation of $(1-T)^{1/n}$ modulo
$T^m$. Consequently,
\begin{equation}\label{rootcongruence}
 g(T)^n\equiv 1-T\pmod{T^m}.
\end{equation}
Let
\[
 P(x,y)=y^rg(x/y)=\sum_{j=0}^{r}c_jx^jy^{r-j}.
\]
This polynomial is homogeneous of degree $r$ and monic in $y$.
Therefore $V(P)\subset\Aff^1_x\times\Aff^1_y$ is finite flat of degree
$r$ over $\Aff^1_x$. Its fundamental cycle defines an integral-coefficient
finite correspondence, which we denote by $\beta_{m,n}$.

We check its admissibility. Let $Z$ be an irreducible component of its
support, let $\widetilde Z$ be the normalization of its closure in
$\PP^1_x\times\PP^1_y$, and let $v$ be the discrete valuation at a
codimension-one point of $\widetilde Z$. Put
\[
 a_x=\max\{0,-v(x)\},\qquad a_y=\max\{0,-v(y)\}.
\]
The coefficients of the pullbacks of $\{\infty_x\}$ and
$\{\infty_y\}$ at this point are $a_x$ and $a_y$, respectively.
We claim that $a_y\leq a_x$. Otherwise $a_y>a_x$, and the term $y^r$ in
$P(x,y)$ has valuation $-ra_y$, whereas, for $j\geq 1$, every nonzero
term satisfies
\[
 v(c_jx^jy^{r-j})
 \geq-ja_x-(r-j)a_y
 =-ra_y+j(a_y-a_x)>-ra_y.
\]
This contradicts $P(x,y)=0$, since a vanishing sum cannot have a unique
term of smallest valuation. Thus $ma_y\leq ma_x$, proving admissibility.
Left properness follows from the properness of the target $\PP^1$.

\medskip
\noindent\textit{Step 2: Construction of the homotopy.}
Define
\begin{equation}\label{homotopypolynomial}
 H(x,t,y)=(1-t)P(x,y)^n+t\,y^{N-1}(y-x).
\end{equation}
The leading coefficient in $y$ is $(1-t)+t=1$. Hence $H$ is monic of
degree $N$ in $y$, and $V(H)$ is finite flat over
$\Aff^1_x\times\Aff^1_t$. Let $\Gamma$ denote its fundamental cycle.

By \eqref{rootcongruence}, there are coefficients $d_j\in k$ such that
\begin{equation}\label{hexpansion}
 H(x,t,y)=y^N-xy^{N-1}
             +\sum_{j=m}^{N}(1-t)d_jx^jy^{N-j}.
\end{equation}
In particular, the terms $x^jy^{N-j}$ with $2\leq j\leq m-1$ do not
occur. Here $N\geq m$.

\medskip
\noindent\textit{Step 3: Admissibility of the homotopy.}
Let $Z$ be an irreducible component of the support of $\Gamma$.
Normalize its closure in
$\PP^1_x\times\PP^1_t\times\PP^1_y$, and let $v$ be the valuation at
an arbitrary codimension-one point of this normalization. Put
\[
 a_x=\max\{0,-v(x)\},\qquad
 a_t=\max\{0,-v(t)\},\qquad
 a_y=\max\{0,-v(y)\}.
\]
The required modulus inequality is
\begin{equation}\label{polebound}
 ma_y\leq ma_x+a_t.
\end{equation}
Indeed, the source modulus is
$m\{\infty_x\}+\{\infty_t\}$ and the target modulus is
$m\{\infty_y\}$.

Suppose that \eqref{polebound} fails. Then
$a_y>a_x+a_t/m$, so $a_y>0$ and
\[
 v(y^N)=-Na_y.
\]
The second term in \eqref{hexpansion} satisfies
\[
 v(xy^{N-1})\geq-a_x-(N-1)a_y
                 =-Na_y+(a_y-a_x)>-Na_y.
\]
Since $v(1-t)\geq-a_t$, for $j\geq m$ we also have
\begin{align*}
 v\bigl((1-t)d_jx^jy^{N-j}\bigr)
 &\geq-a_t-ja_x-(N-j)a_y\\
 &=-Na_y+j(a_y-a_x)-a_t\\
 &>-Na_y
\end{align*}
whenever the corresponding term is nonzero. The last inequality uses
$j\geq m$ and $a_y-a_x>a_t/m$. Again $y^N$ would be the unique term of
smallest valuation in a vanishing sum. This contradiction proves
\eqref{polebound}.

Divisor inequalities on a normal noetherian scheme can be checked at
codimension-one points. We have therefore proved admissibility.
The closure of $Z$ is proper over $\PP^1_x\times\PP^1_t$, because
$\PP^1_y$ is proper. Thus $\Gamma$ is a morphism in $\MCor(k)$ from
$B_m\otimes\cbar$ to $B_m$.

\medskip
\noindent\textit{Step 4: Restriction to the endpoints.}
At $t=0$ and $t=1$, respectively, \eqref{homotopypolynomial} becomes
\[
 H(x,0,y)=P(x,y)^n,
 \qquad
 H(x,1,y)=y^{N-1}(y-x).
\]
As $V(H)$ is finite flat over the source, composition with an endpoint
section is given by the corresponding fiber cycle, with its
scheme-theoretic multiplicities. We obtain
\[
 \Gamma|_{t=0}=n\beta_{m,n},
 \qquad
 \Gamma|_{t=1}=\Delta_{\Aff^1}+(N-1)(\Aff^1\times\{0\}).
\]
The second summand in the last expression represents $(N-1)e_m$.

Let $j_0,j_1\colon B_m\to B_m\otimes\cbar$ be the two endpoint
sections, and let $\pi\colon B_m\otimes\cbar\to B_m$ be the projection.
The morphism $M(\pi)$ is an isomorphism by cube invariance, and
$\pi j_0=\pi j_1=\id$. Hence $M(j_0)=M(j_1)$. Composing this equality with
$M(\Gamma)$ proves \eqref{integralidentity}.
\end{proof}

\begin{remark}\label{denominators}
The coefficients $\binom{1/n}{j}$ belong to the base field $k$.
They are coefficients of equations defining finite correspondences,
not coefficients of cycles. Both $\beta_{m,n}$ and $\Gamma$ are
cycles with integer multiplicities. Thus no inversion of $n$ in
the coefficient ring of motives occurs in Proposition~\ref{identity}.
\end{remark}

\begin{corollary}\label{contract}
For every $m\geq 1$, the structure morphism is an isomorphism
\[
 p_m\colon M(B_m)_\Lambda\xrightarrow{\ \sim\ }\Lambda,
\]
with inverse $i_m$. In particular, for $a\geq b\geq 1$, the morphism
\[
 M(B_a)_\Lambda\longrightarrow M(B_b)_\Lambda
\]
induced by the identity on $\Aff^1$ is an isomorphism.
\end{corollary}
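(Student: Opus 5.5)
We will deduce the corollary from Proposition~\ref{identity} after reducing modulo $n$. First note that $p_mi_m=\id_{\Spec k}$, so $p_m\circ i_m=\id_\Lambda$ in $\MDM^\eff(k,\Lambda)$. It therefore suffices to show that $e_m=i_mp_m$ equals $\id_{M(B_m)_\Lambda}$.

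For $m=1$ we have $B_1=\cbar$, and cube invariance already gives that $p_1\colon M(\cbar)\to\one$ is an isomorphism, even integrally. Since $p_1i_1=\id$, the morphism $i_1$ is its inverse.

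For $m\geq 2$, apply the derived extension of coefficients $(-)_\Lambda$ to the identity~\eqref{integralidentity}. Extension of coefficients is a triangulated functor, so it is additive and compatible with composition. On $\Lambda=\Z/n\Z$ the left-hand side $n\beta_{m,n}$ becomes $n\cdot(\beta_{m,n})_\Lambda$. This vanishes, because $\mathrm{Hom}$ groups in the $\Lambda$-linear category $\MDM^\eff(k,\Lambda)$ are $\Lambda$-modules and so are killed by $n$.

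To be careful here, I would observe that $M(B_m)_\Lambda$ is the image of the $\Lambda$-valued representable presheaf, and that the image of the integral correspondence $n\beta_{m,n}$ is $n$ times a $\Lambda$-linear endomorphism. The right-hand side becomes $\id+(n(m-1)-1)e_m\equiv\id-e_m$. Hence $e_m=\id$ in $\MDM^\eff(k,\Lambda)$, so $i_m$ and $p_m$ are mutually inverse.

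The only point needing care is that extension of coefficients sends $M(B_m)_\Z$ to $M(B_m)_\Lambda$ and morphisms induced by correspondences to the corresponding $\Lambda$-linear morphisms. This holds because $(\Z_{\mathrm{tr}}(B_m))\otimes^{L}\Lambda=\Lambda_{\mathrm{tr}}(B_m)$: representable presheaves are flat, and the functor preserves the cube-invariance and Mayer--Vietoris relations, as recalled in Section~\ref{categories}. I do not expect a real obstacle here.

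For the second statement, let $a\geq b\geq 1$. The identity on $\Aff^1$ defines an admissible map $f\colon B_a\to B_b$, since $a\{\infty\}\geq b\{\infty\}$. It satisfies $p_bf=p_a$. Since $p_a$ and $p_b$ are both isomorphisms, $M(f)=p_b^{-1}p_a$ is an isomorphism.
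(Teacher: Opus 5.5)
Your proposal is correct and follows the paper's proof essentially verbatim. You use cube invariance for $m=1$, reduce the integral identity \eqref{integralidentity} modulo $n$ to get $e_m=\id$ for $m\geq 2$, combine this with $p_mi_m=\id$, and deduce the second assertion from $p_bf=p_a$; your remark that $\Z_{\mathrm{tr}}(B_m)\otimes^{L}\Lambda$ is the $\Lambda$-valued representable just spells out the coefficient-extension compatibility the paper takes from Section~\ref{categories}.
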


\begin{proof}
For $m=1$, the first assertion is cube invariance. For $m\geq 2$,
apply extension of coefficients to \eqref{integralidentity}.
Since $n=0$ in $\Lambda$ and $n(m-1)-1=-1$ in $\Lambda$, it gives
\[
 0=\id_{M(B_m)_\Lambda}-e_m.
\]
Thus $i_mp_m=\id$, while $p_mi_m=\id$ holds before localization.
This proves the first assertion. The morphism in the second assertion
commutes with the two structure morphisms, both of which are
isomorphisms.
\end{proof}

\begin{example}
For $m=n=2$, the construction reads
\[
 P(x,y)=y-\frac{x}{2},\qquad
 H(x,t,y)=y^2-xy+\frac{1-t}{4}x^2.
\]
It gives the integral identity
\[
 2[\Gamma_{x\mapsto x/2}]=[\Delta_{\Aff^1}]+e_2
\]
on $M(\PP^1,2\{\infty\})_\Z$.
\end{example}

\section{Removing the multiplicities of the modulus}\label{multiplicity}

\subsection{Excision}

Let $X$ be smooth, let $Z\subset X$ be a smooth integral effective
Cartier divisor, and let $E$ be an effective Cartier divisor not
containing $Z$. Suppose that $|E|\cup Z$ is strict normal crossings.
For $a\geq b\geq 1$, set
\begin{equation}\label{defcone}
 C_{a,b}(X;E,Z)
 =\Cone\bigl(\uM(X,E+aZ)_\Lambda
                   \longrightarrow\uM(X,E+bZ)_\Lambda\bigr).
\end{equation}
This is the image of the corresponding two-term complex of
representable presheaves in degrees $-1,0$.
By \cite[Theorem~3.2]{M23a}, after coefficient extension, an
\'etale morphism $f\colon X'\to X$ inducing an isomorphism
$Z'=f^{-1}Z\to Z$ gives an isomorphism
\begin{equation}\label{excision}
 C_{a,b}(X';f^*E,Z')\xrightarrow{\sim}C_{a,b}(X;E,Z).
\end{equation}
The cokernel formulation in that theorem agrees with
\eqref{defcone}, since the maps of integral representables induced
by decreasing the modulus are injective.
We also use this notation when $Z$ is empty, in which case the
cone is zero.

\begin{lemma}\label{affineline}
For $a\geq b\geq 1$, the natural morphism
\[
 \uM(\Aff^1,a\{0\})_\Lambda
 \longrightarrow\uM(\Aff^1,b\{0\})_\Lambda
\]
is an isomorphism.
\end{lemma}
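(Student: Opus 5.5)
We reduce the statement to Corollary~\ref{contract} by the excision isomorphism \eqref{excision}. We regard $\PP^1$ with coordinate $s$ and take $Z=\{0\}$. We want $C_{a,b}(\Aff^1;\emptyset,\{0\})=0$. Consider the open immersion $f\colon \Aff^1\hookrightarrow\PP^1$, which is \'etale and induces an isomorphism on the preimage of $Z=\{0\}$. By \eqref{excision} (with $E=\emptyset$), we get
\[
 C_{a,b}(\Aff^1;\emptyset,\{0\})\xrightarrow{\sim}C_{a,b}(\PP^1;\emptyset,\{0\}).
\]
The right-hand side is the cone of $\uM(\PP^1,a\{0\})_\Lambda\to\uM(\PP^1,b\{0\})_\Lambda$. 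Since $\PP^1$ is proper, $\uM(\PP^1,c\{0\})_\Lambda=\tau_{\eff}M(\PP^1,c\{0\})_\Lambda$. Via the automorphism $s\mapsto 1/s$ of $\PP^1$, the pair $(\PP^1,c\{0\})$ is identified with $B_c$, compatibly with the maps induced by the identity of the interior. By Corollary~\ref{contract}, $M(B_a)_\Lambda\to M(B_b)_\Lambda$ is an isomorphism, so applying $\tau_{\eff}$ the cone vanishes. Hence $C_{a,b}(\Aff^1;\emptyset,\{0\})=0$, which is the claim.

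The steps to verify carefully are the following.

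\begin{enumerate}
\item The hypotheses of \cite[Theorem~3.2]{M23a} as quoted: $X=\PP^1$ and $X'=\Aff^1$ are smooth, $Z$ is a smooth integral Cartier divisor, $E$ is empty (allowed, and trivially not containing $Z$), and $|E|\cup Z$ is a point, hence strict normal crossings. The excision statement only requires $f$ étale with $f^{-1}Z\to Z$ an isomorphism, which the open immersion satisfies.
\item The identification of $\uM$ of a proper pair with $\tau_{\eff}$ of $M$. This is built into the definitions and the compatibility of $\tau_{\eff}$ with representables from \cite{KMSY22}.
\item Naturality: the change-of-modulus maps commute with the coordinate change and with $f$. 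This is formal.
\end{enumerate}

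The only potential obstacle is the matching between the excision theorem's cone and the map in question, i.e.\ that the isomorphism \eqref{excision} is induced by $f$ on both representables in a way compatible with the cones. This was already addressed in the text around \eqref{defcone}. If one prefers to avoid the automorphism $s\mapsto1/s$, one can instead take $Z=\{\infty\}$ and excise along the chart $\Aff^1=\PP^1\setminus\{0\}$, which directly gives $B_c$.
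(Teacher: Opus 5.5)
Your proposal is correct and matches the paper's proof: excision along $\Aff^1\hookrightarrow\PP^1$, the exchange of $0$ and $\infty$ that identifies $(\PP^1,c\{0\})$ with $B_c$, and Corollary~\ref{contract}. Your explicit use of $\uM=\tau_{\eff}M$ for proper pairs only spells out the passage from $\MDM^\eff$ to $\uMDM^\eff$, which the paper leaves implicit.
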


\begin{proof}
Excision for $\Aff^1\hookrightarrow\PP^1$ identifies its cone with
\[
 \Cone\bigl(\uM(\PP^1,a\{0\})_\Lambda
                    \longrightarrow\uM(\PP^1,b\{0\})_\Lambda\bigr).
\]
Exchanging $0$ and $\infty$ identifies this with the cone of
$\uM(B_a)_\Lambda\to\uM(B_b)_\Lambda$, which vanishes by
Corollary~\ref{contract}.
\end{proof}

\subsection{Strict normal crossings divisors}

\begin{proposition}\label{snc}
Let $X$ be smooth and let $D\geq D'$ be effective Cartier divisors
with the same strict normal crossings support. Then
\[
 \uM(X,D)_\Lambda\longrightarrow\uM(X,D')_\Lambda
\]
is an isomorphism. In particular,
$\uM(X,D)_\Lambda\simeq\uM(X,D_\red)_\Lambda$.
If $X$ is proper, the same assertions hold in
$\MDM^\eff(k,\Lambda)$.
\end{proposition}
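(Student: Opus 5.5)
We reduce to the case where $D$ and $D'$ differ along a single component, and then use the excision isomorphism \eqref{excision} to transport the problem to a product with the affine line, where Lemma~\ref{affineline} applies.

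\emph{Reduction.} Write $D=\sum_i a_iZ_i$ and $D'=\sum_i b_iZ_i$ with $a_i\geq b_i\geq 1$, where the $Z_i$ are the smooth components of the common support. The morphism factors as a chain of morphisms, each lowering the multiplicity of one component by one. It therefore suffices to treat $D=E+aZ$ and $D'=E+bZ$, where $Z$ is a single component, $E$ does not contain $Z$, and $|E|\cup Z$ is strict normal crossings. Since $\uM(X,D)_\Lambda\to\uM(X,D')_\Lambda$ is an isomorphism exactly when its cone vanishes, the goal is
\[
 C_{a,b}(X;E,Z)=0 .
\]

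\emph{Local reduction.} Next we make the problem local on $X$. The cones $C_{a,b}$ are compatible with Zariski (indeed Nisnevich) covers through the Mayer--Vietoris relations. Concretely, for an open cover $X=U\cup V$ there is a distinguished triangle built from $C_{a,b}$ of $U\cap V$, $U\oplus V$ and $X$, with the restricted divisors. Hence vanishing can be checked on a cover, and opens missing $Z$ contribute zero. Near a point $z\in Z$, choose local coordinates $t_1,\dots,t_d$ such that $Z=\{t_1=0\}$ and $E$ is a combination of the divisors $\{t_i=0\}$ for $2\leq i\leq s$. Shrinking $X$ to a neighbourhood $U$, we obtain an étale map
\[
 f\colon U\to \Aff^1\times Y,\qquad Y=\Aff^{d-1},
\]
given by the coordinates, with $Z\cap U=f^{-1}(\{0\}\times Y)$ and $E|_U=f^*(\Aff^1\times E_Y)$. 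Here $E_Y$ is the corresponding coordinate divisor on $Y$. In general $f$ restricted to $Z$ is only étale, not an isomorphism. To fix this we use the standard Nisnevich trick. Consider the fiber product
\[
 W=U\times_{\Aff^1\times Y}(\Aff^1\times Z_U),
\]
where $\Aff^1\times Z_U\to\Aff^1\times Y$ is induced by $f|_{Z_U}$, and take the component of $W$ containing the diagonal copy of $Z_U$, removing the other preimages of $Z$. This yields étale maps $U\leftarrow W\to \Aff^1\times Z_U$, each inducing an isomorphism on the preimage of $Z$ and compatible with $E$. Applying \eqref{excision} twice gives
\[
 C_{a,b}(U;E,Z)\simeq C_{a,b}\bigl(\Aff^1\times Z_U;\ \Aff^1\times E_{Z},\ \{0\}\times Z_U\bigr),
\]
where the pullback of $E$ has the form $\Aff^1\times E_Z$ after shrinking, since the strict normal crossings coordinates restrict to $Z$.

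\emph{Conclusion via the product structure.} The pair $(\Aff^1\times Z_U,\ \Aff^1\times E_Z+c\{0\}\times Z_U)$ is the tensor product $(\Aff^1,c\{0\})\otimes(Z_U,E_Z)$. Since $\uM$ is symmetric monoidal, the cone becomes
\[
 \Cone\bigl(\uM(\Aff^1,a\{0\})_\Lambda\to\uM(\Aff^1,b\{0\})_\Lambda\bigr)\otimes\uM(Z_U,E_Z)_\Lambda,
\]
which vanishes by Lemma~\ref{affineline}.

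The claim about $D_\red$ is the special case $D'=D_\red$. For proper $X$, the functor $\tau_\eff$ is fully faithful, sends $M(X,D)_\Lambda$ to $\uM(X,D)_\Lambda$, and is compatible with the morphism, so the isomorphism in $\uMDM^\eff(k,\Lambda)$ reflects to one in $\MDM^\eff(k,\Lambda)$.

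The main obstacle is the local step. We must arrange the étale neighbourhoods so that $f^{-1}Z\to Z$ is genuinely an isomorphism and $E$ pulls back to the product divisor; this requires discarding extra components of the fiber product and shrinking compatibly with the other components of $E$. We also need the cones $C_{a,b}$ to satisfy Mayer--Vietoris so that vanishing is Zariski local. If the latter is not directly available, we would instead invoke \cite[Theorem~3.2]{M23a} in its Nisnevich-local form.
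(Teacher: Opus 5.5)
Your proposal is correct and follows essentially the same route as the paper. Both arguments reduce to a single component, localize Zariski-locally via Mayer--Vietoris, pass through a common \'etale neighbourhood of the chart and $Z_U\times\Aff^1$ by applying excision twice, and conclude with the tensor decomposition and Lemma~\ref{affineline}, using full faithfulness of $\tau_{\eff}$ for the proper case. The one difference is that the paper cites \cite[Lemma~8]{KS21} for the cover and neighbourhoods, whereas you build them by hand from local coordinates and the fiber-product trick. Also note that the Mayer--Vietoris triangles need vanishing on the intersections of the cover as well; this holds because your chart construction restricts verbatim to every open subset of a chart, a point the paper states explicitly and you should too.
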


\begin{proof}
We change one multiplicity at a time. Write the two divisors as
$E+aZ$ and $E+bZ$, where $a\geq b\geq 1$ and $Z$ is a smooth
irreducible component of their common support. It suffices to
show that $C_{a,b}(X;E,Z)=0$.

The immersion $(Z,E|_Z)\hookrightarrow(X,E)$ is transversal in
the sense of \cite[Definition~7]{KS21}. By
\cite[Lemma~8]{KS21}, there is a finite Zariski cover
$X=U_0\cup\cdots\cup U_s$, with $U_0=X\setminus Z$, and, for
$i\geq 1$, common \'etale neighborhoods
\begin{equation}\label{commonneighbor}
 U_i\xleftarrow{\pi_1}V_i\xrightarrow{\pi_2}Z_i\times\Aff^1,
 \qquad Z_i=Z\cap U_i.
\end{equation}
Put $E_i=E|_{U_i}$ and $E_{Z_i}=E|_{Z_i}$.
Both morphisms in \eqref{commonneighbor} are isomorphisms over the
distinguished copy of $Z_i$, and their pullback divisors satisfy
\[
 \pi_1^{-1}Z_i=\pi_2^{-1}(Z_i\times\{0\})\simeq Z_i,
 \qquad
 \pi_1^*E_i=\pi_2^*\operatorname{pr}_{Z_i}^*E_{Z_i}.
\]
Thus the closed immersions fit into the diagram
\[
\begin{tikzcd}[column sep=large]
 Z_i\arrow[d,hook]&
 Z_i\arrow[l,equal]\arrow[r,equal]\arrow[d,hook]&
 Z_i\arrow[d,hook,"0"]\\
 U_i&V_i\arrow[l,"\pi_1"']\arrow[r,"\pi_2"]&Z_i\times\Aff^1.
\end{tikzcd}
\]
Excision on both sides of \eqref{commonneighbor} gives
\begin{align*}
 C_{a,b}(U_i;E_i,Z_i)
 &\simeq
 C_{a,b}(Z_i\times\Aff^1;
              \operatorname{pr}_{Z_i}^*E_{Z_i},Z_i\times\{0\})\\
 &\simeq
 \uM(Z_i,E_{Z_i})_\Lambda\otimes
 C_{a,b}(\Aff^1;0,\{0\})
 =0.
\end{align*}
The second isomorphism follows from the tensor product of modulus
pairs and exactness of tensoring; the last equality is
Lemma~\ref{affineline}.

The same argument applies on every open subset $U\subset U_i$.
Indeed, replace $V_i$ by
\[
 \pi_1^{-1}U\cap
 \pi_2^{-1}\bigl((Z_i\cap U)\times\Aff^1\bigr).
\]
This is again a common \'etale neighborhood with the same pullback
identities. Hence the cones vanish on all finite intersections of
the cover. They also vanish on $U_0$.
Applying Mayer--Vietoris to the two-term complexes defining
\eqref{defcone} yields $C_{a,b}(X;E,Z)=0$.

Successively reducing the multiplicities proves the assertion in
$\uMDM^\eff$. For proper $X$, it follows in $\MDM^\eff$ from the
full faithfulness of $\tau_{\eff}$.
\end{proof}

\section{Comparison with Voevodsky motives}\label{comparison}

\subsection{Reduced boundaries}

\begin{proposition}\label{reducedcomparison}
Let $X$ be smooth and proper, let $E$ be a reduced strict normal
crossings divisor, and put $U=X\setminus E$. For $R=\Z$ or
$\Lambda$, the adjunction unit
\[
 M(X,E)_R\longrightarrow\omega^\eff\MV(U)_R
\]
is an isomorphism.
\end{proposition}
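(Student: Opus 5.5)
The plan is to reduce to the integral case and cite the reduced-boundary comparison \cite[Theorem~7.1]{M23a}, as the introduction announces. That theorem states that for a smooth proper $X$ with reduced strict normal crossings divisor $E$ and $U=X\setminus E$, the unit $M(X,E)_\Z\to\omega^\eff\MV(U)_\Z$ is an isomorphism in $\MDM^\eff(k,\Z)$. It is proved there under resolution hypotheses that hold automatically in characteristic zero by Hironaka. So the case $R=\Z$ is exactly that theorem. The only work is to check that the unit appearing there agrees with the adjunction unit of $\omega_\eff\dashv\omega^\eff$ recalled in Section~\ref{categories}. Both are the unit of the adjunction from \cite[Proposition~6.1.2]{KMSY22}, using $\omega_\eff M(X,E)_\Z=\MV(U)_\Z$.

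For $R=\Lambda$, I would apply derived extension of coefficients $(-)_\Lambda$ to the integral isomorphism. Section~\ref{categories} records that the comparison adjunctions are compatible with coefficient extension, including units. It also records the identification $(\omega^\eff_\Z B)_\Lambda\simeq\omega^\eff_\Lambda(B_\Lambda)$. Moreover, $M(X,E)_\Z\otimes\Lambda=M(X,E)_\Lambda$ and $\MV(U)_\Z\otimes\Lambda=\MV(U)_\Lambda$, since representables go to representables. The square formed by the extended integral unit and the $\Lambda$-unit therefore commutes, with vertical isomorphisms, so the $\Lambda$-unit is an isomorphism.

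The main obstacle is the compatibility in the second step, specifically that $\omega^\eff$ commutes with $-\otimes^L\Lambda$. Since $\Lambda=\Cone(\Z\xrightarrow{n}\Z)$ and $\omega^\eff$ is triangulated, I would prove this directly. Extension of coefficients is the cone of multiplication by $n$, and $\omega^\eff$ commutes with cones. One also has to check that restriction of scalars along $\Z\to\Lambda$ intertwines the relevant functors. This holds because $\omega^\eff$ preserves coproducts, and the restriction is conservative and compatible with the localizations. Given that, the unit compatibility is formal.
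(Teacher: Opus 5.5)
Your route is the paper's route: get the integral case from \cite[Theorem~7.1]{M23a}, then pass to $\Lambda$ by derived extension of coefficients. However, the integral step as you state it has two holes.

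First, the paper uses \cite[Theorem~7.1]{M23a} as a statement in $\uMDM^\eff(k,\Z)$, namely that the unit $\uM(X,E)_\Z\to\underline\omega^\eff\MV(U)_\Z$ is an isomorphism. It is not used as a statement in $\MDM^\eff(k,\Z)$. To descend to the proper category you must apply $\tau^\eff$ and use two facts: $\tau^\eff\tau_\eff\simeq\id$, which is full faithfulness of $\tau_\eff$, and $\tau^\eff\underline\omega^\eff\simeq\omega^\eff$. Concretely, $\omega_\eff=\underline\omega_\eff\tau_\eff$, so the unit of $\omega_\eff\dashv\omega^\eff$ at $M(X,E)_\Z$ is the invertible unit of $\tau_\eff\dashv\tau^\eff$ followed by $\tau^\eff$ applied to the $\underline\omega$-unit at $\tau_\eff M(X,E)_\Z=\uM(X,E)_\Z$. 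Your remark that both are ``the unit of the adjunction from \cite[Proposition~6.1.2]{KMSY22}'' does not supply this step.

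Second, the paper's conventions allow $E=\emptyset$, and this case is really used in the proof of Theorem~\ref{main}. The generators there include $M(X,\emptyset)_\Lambda$ for $X$ smooth proper, for example the unit object. The paper does not take this case from \cite{M23a}; it cites \cite[Theorem~6.3.1]{KMSY22} separately. Your proposal never addresses the empty boundary.

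The coefficient step agrees with the paper. The compatibility you single out as the main obstacle is simply recorded in Section~\ref{categories} and taken as given there. If you do want to prove it, coproduct preservation is not the key input. What you need is the following:
\begin{enumerate}[label=(\roman*)]
\item Extension of coefficients commutes with the left adjoints $\omega_\eff$, already at the level of presheaves with transfers.
\item Passing to right adjoints then gives $\omega^\eff_\Z\rho\simeq\rho\,\omega^\eff_\Lambda$, where $\rho$ is restriction of scalars.
\item Since $\rho(A_\Lambda)\simeq\Cone(A\xrightarrow{n}A)$ and $\rho$ is conservative, the comparison map $(\omega^\eff_\Z B)_\Lambda\to\omega^\eff_\Lambda(B_\Lambda)$ is an isomorphism.
\end{enumerate}
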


\begin{proof}
For integral coefficients, \cite[Theorem~7.1]{M23a} gives the unit
isomorphism in $\uMDM^\eff$. Apply $\tau^\eff$ and use
$\tau^\eff\tau_{\eff}\simeq\id$ and
$\tau^\eff\underline\omega^\eff\simeq\omega^\eff$.
For the empty boundary, use \cite[Theorem~6.3.1]{KMSY22}.
The finite-coefficient assertion follows by derived extension of
coefficients and compatibility of the comparison adjunctions with
this extension.
\end{proof}

\subsection{Proof of the main theorem}

\begin{proof}[Proof of Theorem~\ref{main}]
As in \cite[proof of Theorem~8.9]{M23a}, it suffices to check the
unit on smooth proper pairs with strict normal crossings support.
Indeed, log resolution \cite{T18} replaces every proper modulus
pair by such a pair, without changing its object in $\MCor(k)$;
see \cite[Theorem~1.2.3]{KMSY22} or \cite[Remark~2.3]{M23a}.
Their motives compactly generate, the right adjoint is fully
faithful, and both adjoint functors preserve coproducts.

For a smooth proper pair $(X,D)$ with strict normal crossings
support, Propositions~\ref{snc} and \ref{reducedcomparison} give
\[
 M(X,D)_\Lambda
 \xrightarrow{\sim}M(X,D_\red)_\Lambda
 \xrightarrow{\sim}\omega^\eff\MV(X\setminus|D|)_\Lambda.
\]
The composite is the unit by naturality. This proves the
equivalence. The functor is symmetric monoidal and preserves
compact objects, as does its inverse.
\end{proof}

\begin{corollary}\label{arbitrarymodulus}
For any proper modulus pair $(X,D)$, there is a natural isomorphism
\[
 M(X,D)_\Lambda\xrightarrow{\sim}
 \omega^\eff\MV(X\setminus|D|)_\Lambda.
\]
If $D\geq D'$ are effective Cartier divisors on $X$ with the same
support, then $M(X,D)_\Lambda\to M(X,D')_\Lambda$ is an isomorphism.
\end{corollary}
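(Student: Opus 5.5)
The first assertion should follow formally from Theorem~\ref{main}: the unit $M(X,D)_\Lambda\to\omega^\eff\omega_{\eff}M(X,D)_\Lambda$ is an isomorphism. So the plan is to identify $\omega_{\eff}M(X,D)_\Lambda$ with $\MV(X\setminus|D|)_\Lambda$ and then invoke the equivalence.

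\textbf{Step 1.} Check that $\omega_{\eff}$ sends $M(X,D)_\Lambda$ to $\MV(X\setminus|D|)_\Lambda$. For integral coefficients this is part of the construction in \cite[Proposition~6.1.2]{KMSY22}, since $\omega_{\eff}$ is induced by taking the interior on $\MCor(k)$. It persists after derived extension of coefficients.

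\textbf{Step 2.} Apply Theorem~\ref{main}. Because $\omega_{\eff}$ is an equivalence, its right adjoint $\omega^\eff$ is a quasi-inverse, so the unit is an isomorphism on every object. Composing the unit at $M(X,D)_\Lambda$ with the identification of Step~1 gives
\[
M(X,D)_\Lambda\xrightarrow{\sim}\omega^\eff\MV(X\setminus|D|)_\Lambda .
\]
This map is natural in $(X,D)$ for morphisms in $\MCor(k)$, because the unit is a natural transformation and the interior functor is natural.

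\textbf{Step 3.} For the second assertion, take $D\geq D'$ with $|D|=|D'|$. The identity of $X\setminus|D|$ is an admissible left proper correspondence $(X,D)\to(X,D')$, since $D\geq D'$. Its interior is the identity of $U=X\setminus|D|$. Naturality of the isomorphisms from Step~2 therefore gives a commutative square. Its two horizontal arrows are the isomorphisms for $(X,D)$ and $(X,D')$, and its right vertical arrow is $\omega^\eff(\id_{\MV(U)_\Lambda})$, which is the identity. Hence the left vertical arrow $M(X,D)_\Lambda\to M(X,D')_\Lambda$ is an isomorphism.

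\textbf{Main obstacle.} The only point needing care is that the unnormalized $X$ is allowed here, so the snc argument of Proposition~\ref{snc} does not apply directly. The approach above avoids this, since Theorem~\ref{main} already covers all proper pairs. What must be checked is that the identification in Step~1 holds for arbitrary proper pairs, not just for smooth snc ones. This holds because $\omega_{\eff}$ is defined on all of $\MCor(k)$ via the interior.
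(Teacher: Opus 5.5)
Your proposal is correct and follows the paper's proof: the first assertion is the unit isomorphism provided by Theorem~\ref{main}, and the second holds because $\omega_{\eff}$ sends the morphism to the identity on the motive of the common interior. Your naturality square built from the unit isomorphisms is just an unwinding of the paper's implicit use of the fact that an equivalence reflects isomorphisms.
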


\begin{proof}
The first assertion is the unit isomorphism. For the second,
$\omega_{\eff}$ sends the indicated morphism to the identity on
the motive of the common interior.
\end{proof}

\subsection*{Use of AI tools}
The author used ChatGPT 6 Astra (OpenAI) to correct grammatical
errors in the English text, check for typographical errors,
and identify mistakes in the author's calculations.
The author takes full responsibility for the content of this paper.

\bibliographystyle{amsplain}
\bibliography{references}

@article{BCKS17,
  author = {Binda, Federico and Cao, Jin and Kai, Wataru and Sugiyama, Rin},
  title = {Torsion and divisibility for reciprocity sheaves and {$0$}-cycles with modulus},
  journal = {J. Algebra},
  volume = {469},
  year = {2017},
  pages = {437--463},
  doi = {10.1016/j.jalgebra.2016.07.036}
}

@article{KMSY21a,
  author = {Kahn, Bruno and Miyazaki, Hiroyasu and Saito, Shuji and Yamazaki, Takao},
  title = {Motives with modulus, {I}: Modulus sheaves with transfers for non-proper modulus pairs},
  journal = {\'Epijournal G\'eom. Alg\'ebrique},
  volume = {5},
  year = {2021},
  pages = {Paper No. 1, 46},
  doi = {10.46298/epiga.2021.volume5.5979},
  eprint = {1908.02975},
  archivePrefix = {arXiv}
}

@article{KMSY21b,
  author = {Kahn, Bruno and Miyazaki, Hiroyasu and Saito, Shuji and Yamazaki, Takao},
  title = {Motives with modulus, {II}: Modulus sheaves with transfers for proper modulus pairs},
  journal = {\'Epijournal G\'eom. Alg\'ebrique},
  volume = {5},
  year = {2021},
  pages = {Paper No. 2, 31},
  doi = {10.46298/epiga.2021.volume5.5980},
  eprint = {1910.14534},
  archivePrefix = {arXiv}
}

@article{KMSY22,
  author = {Kahn, Bruno and Miyazaki, Hiroyasu and Saito, Shuji and Yamazaki, Takao},
  title = {Motives with modulus, {III}: The categories of motives},
  journal = {Ann. K-Theory},
  volume = {7},
  number = {1},
  year = {2022},
  pages = {119--178},
  doi = {10.2140/akt.2022.7.119},
  note = {arXiv:2011.11859v3}
}

@article{KS21,
  author = {Kelly, Shane and Saito, Shuji},
  title = {Smooth blowup square for motives with modulus},
  journal = {Bull. Polish Acad. Sci. Math.},
  volume = {69},
  number = {2},
  year = {2021},
  pages = {97--106},
  doi = {10.4064/ba190820-10-3},
  eprint = {1907.12759},
  archivePrefix = {arXiv}
}

@article{M19,
  author = {Miyazaki, Hiroyasu},
  title = {Cube invariance of higher {Chow} groups with modulus},
  journal = {J. Algebraic Geom.},
  volume = {28},
  number = {2},
  year = {2019},
  pages = {339--390},
  doi = {10.1090/jag/726},
  eprint = {1604.06155},
  archivePrefix = {arXiv}
}

@article{M23a,
  author = {Matsumoto, Keiho},
  title = {{Gysin} triangles in the category of motifs with modulus},
  journal = {J. Inst. Math. Jussieu},
  volume = {22},
  number = {5},
  year = {2023},
  pages = {2131--2154},
  doi = {10.1017/S1474748021000554},
  note = {arXiv:1812.10890v4}
}

@article{T18,
  author = {Temkin, Michael},
  title = {Functorial desingularization over {$\mathbb{Q}$}: boundaries and the embedded case},
  journal = {Israel J. Math.},
  volume = {224},
  number = {1},
  year = {2018},
  pages = {455--504},
  doi = {10.1007/s11856-018-1656-6}
}

@incollection{V00,
  author = {Voevodsky, Vladimir},
  title = {Triangulated categories of motives over a field},
  booktitle = {Cycles, transfers, and motivic homology theories},
  series = {Annals of Mathematics Studies},
  volume = {143},
  pages = {188--238},
  publisher = {Princeton University Press},
  address = {Princeton, NJ},
  year = {2000}
}

\end{document}